\documentclass[12pt]{article}
\usepackage[utf8]{inputenc}
\usepackage[T1]{fontenc}
\usepackage{amsmath, amssymb, amsthm}
\usepackage{hyperref}

\title{A unified parametric approach to the Erdős–Straus conjecture with explicit solutions for a set of integers of natural density one.}

\author{Philemon Urbain Mballa \\[1ex]
\href{mailto:philemon-urbain.mballa@etu.u-paris.fr}{philemon-urbain.mballa@etu.u-paris.fr} \\
\href{mailto:philemonmballa@gmail.com}{philemonmballa@gmail.com}
}

\date{}

\newtheorem{proposition}{Proposition}

\begin{document}
\maketitle

\begin{abstract}
We develop a parametric approach to study the Diophantine equation 
$\frac{k}{n} = \frac{1}{x} + \frac{1}{y} + \frac{1}{z}$, underlying the 
Erdős--Straus ($k=4$), Sierpiński ($k=5$), and related generalizations. 
We introduce and analyze the properties of the fundamental function 
$F_{x,t}^{(k)}(n) = t^2(kx-n)^2 - 2nxt$, 
whose being a perfect square is equivalent to yielding a solution of these conjectures.

In the classical Erdős--Straus case ($k=4$), for the residue classes 
$n \equiv 0,2,3 \pmod{4}$, we provide explicit symmetric solutions $y=z$, 
covering already 75\% of all integers. 
For the historically most resistant class $n \equiv 1 \pmod{4}$, 
we construct explicit symmetric solutions based on the existence of a divisor 
$b \equiv 3 \pmod{4}$, and we further show that this condition 
is satisfied for almost all such integers: the set of exceptions has 
natural density zero. Consequently, the Erdős--Straus conjecture is verified 
for a proportion of integers tending to $1$ in this class.

These results yield infinitely many new families of explicit solutions not covered 
by previous constructions, highlight the structural behavior of $F$.
\end{abstract}

\section{Introduction}

The Erdős--Straus conjecture, formulated in 1948 by Paul Erdős and Ernst G. Strauss, states that for every integer \(n\ge 2\), there exist positive integers \(x,y,z\) such that
\[
\frac{4}{n} = \frac{1}{x} + \frac{1}{y} + \frac{1}{z}. \tag{1}
\]
Despite numerous works, this conjecture remains open to this day. Table 1, adapted from Tao's article \cite{2}, traces the history of numerical verifications that have progressively pushed forward the bound up to which the conjecture is validated.
\begin{table}[htbp]
\centering
\begin{tabular}{|c|c|c|}
\hline
\textbf{Bound} & \textbf{Year} & \textbf{Author(s)} \\
\hline
\(5000\) & \(\leqslant 1950\) & Straus, see \cite{3} \\
\(8000\) & 1962 & Bernstein \cite{4} \\
\(20000\) & \(\leqslant 1969\) & Shapiro, see \cite{5} \\
\(106128\) & 1948/9 & Oblath \cite{6} \\
\(141648\) & 1954 & Rosati \cite{7} \\
\(10^{7}\) & 1964 & Yamomoto \cite{8} \\
\(1.1\times 10^{7}\) & 1976 & Jollensten \cite{9} \\
\(10^{8}\) & 1971 & Terzi \cite{10} \\
\(10^{9}\) & 1994 & Elsholtz \& Roth (unpublished) \\
\(10^{10}\) & 1995 & Elsholtz \& Roth (unpublished) \\
\(1.6\times 10^{11}\) & 1996 & Elsholtz \& Roth (unpublished) \\
\(10^{10}\) & 1999 & Kotsireas \cite{11} \\
\(10^{14}\) & 1999 & Swett \cite{12} \\
\(2\times 10^{14}\) & 2012 & Bello-Hernández, Benito, Fernández \cite{13} \\
\(10^{17}\) & 2014 & Salez \cite{14} \\
\(10^{18}\) & 2025 & Mihnea \& Bogdan \cite{15} \\
\hline
\end{tabular}
\caption{Numerical verifications of the Erdős--Straus conjecture.}
\end{table}

The Polish mathematician Wacław Sierpiński generalized this question by replacing the numerator \(4\) with \(5\):
\[
\frac{5}{n} = \frac{1}{x} + \frac{1}{y} + \frac{1}{z}. \tag{2}
\]
This conjecture, known as the Sierpiński conjecture, has also been the subject of numerous investigations. Article \cite{16} presents a detailed history of numerical verifications for \(k=5\).

A natural generalization, often attributed to Andrzej Schinzel (a student of Sierpiński), consists in considering for any fixed integer \(k\ge 4\) the equation
\[
\frac{k}{n} = \frac{1}{x} + \frac{1}{y} + \frac{1}{z}. \tag{3}
\] for all natural numbers \(n\), except for finitely many \(n\).
In the literature, almost all works focus on the cases \(k=4\) and \(k=5\), which is naturally explained by the increasing complexity of the problem as \(k\) grows.

In our previous article \cite{1}, we proposed a unified approach to all these conjectures. For any fixed integer \(k\ge 4\), we introduced the function
\[
F_{x,t}^{(k)}(n) = t^2(kx-n)^2 - 2nxt
\]
and established the following fundamental equivalence:
\[
\frac{k}{n} = \frac{1}{x} + \frac{1}{y} + \frac{1}{z} \;\Longleftrightarrow\; \exists x, t\in\mathbb{N}^*,\; F_{x,t}^{(k)}(n) \text{ is a perfect square } m^2, m\in\mathbb{N},
\] for every integer \(n\ge N_1\) where \(N_1\) is an integer greater than or equal to \(2\),
with then \(y = t(kx-n) + m\) and \(z = t(kx-n) - m\). This equivalence reduces the search for solutions to the condition that \(F_{x,t}^{(k)}(n)\) is a perfect square.

The present article deepens this approach by studying the fundamental properties of the function \(F\). For each fixed pair \((x,t)\), we define its admissible domain \(\mathcal{D}_{x,t}^{(k)}\) and prove that on this domain, \(F\) is strictly decreasing, nonnegative, and converges to its minimum. A key result, the **Zero Lemma**, establishes that if \(F(n_0)=0\) for some \(n_0\) in the domain, then \(n_0\) is necessarily the upper bound of \(\mathcal{D}_{x,t}^{(k)}\).

These properties, although local (specific to each pair), are essential for understanding the behavior of \(F\) and constitute a fundamental step toward characterizing the pairs \((x,t)\) for which \(F\) is a perfect square.

 \section*{Decrease, boundedness and convergence on the admissible domain}

\subsection*{Admissible domain of a pair}

Let \(k \ge 4\) be a fixed integer and for a given pair \((x,t) \in \mathbb{N}^{*2}\), we define its \textbf{admissible domain}:
\[
\mathcal{D}_{x,t}^{(k)} = \left\{ n \in \mathbb{N},\ n \ge N_1\ge 2 \ \middle|\ n < kx \ \text{and}\ t \ge \frac{2nx}{(kx-n)^2} \right\}.
\]
The condition \(n < kx\) ensures that \(x > n/k\) (necessary condition in the quadratic equivalence). The second condition guarantees the positivity of \(F\).

\begin{proposition}[Decrease and positivity on the domain]
Let \(k \ge 4\) and \((x,t) \in \mathbb{N}^{*2}\) be fixed. For every \(n \in \mathcal{D}_{x,t}^{(k)}\), we have:
\[
F_{x,t}^{(k)}(n) = t^2(kx-n)^2 - 2nxt \ge 0.
\]
Moreover, for all \(n_1, n_2 \in \mathcal{D}_{x,t}^{(k)}\) with \(n_1 > n_2\), we have:
\[
F_{x,t}^{(k)}(n_1) < F_{x,t}^{(k)}(n_2).
\]
Thus, \(n \mapsto F_{x,t}^{(k)}(n)\) is strictly decreasing on \(\mathcal{D}_{x,t}^{(k)}\).
\end{proposition}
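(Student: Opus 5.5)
Positivity comes straight from the definition of the domain, and the decrease comes from viewing $F$ as a function of a real variable. Fix $k\ge 4$ and $(x,t)\in\mathbb{N}^{*2}$. For $n\in\mathcal{D}_{x,t}^{(k)}$ we have $kx-n>0$, so $(kx-n)^2>0$. The condition $t\ge \frac{2nx}{(kx-n)^2}$ can therefore be multiplied through by $t(kx-n)^2>0$, giving $t^2(kx-n)^2\ge 2nxt$, that is $F_{x,t}^{(k)}(n)\ge 0$.

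For the monotonicity, I extend $F$ to the real interval $u\in[N_1,kx)$ by
\[
f(u)=t^2(kx-u)^2-2uxt .
\]
Its derivative is
\[
f'(u)=-2t^2(kx-u)-2xt=-2t\bigl(t(kx-u)+x\bigr).
\]
Since $t>0$, $x>0$ and $kx-u>0$ on this interval, $f'(u)<0$ throughout. Hence $f$ is strictly decreasing on $[N_1,kx)$.

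Now take $n_1>n_2$ in the domain. Both lie in $[N_1,kx)$, so $F(n_1)=f(n_1)<f(n_2)=F(n_2)$. The second condition of the domain plays no role in this step.

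To avoid calculus, one can instead compute the difference directly using $a^2-b^2=(a-b)(a+b)$:
\[
F(n_2)-F(n_1)=t(n_1-n_2)\bigl[t(2kx-n_1-n_2)+2x\bigr].
\]
Each factor is positive: $n_1-n_2>0$, and $2kx-n_1-n_2>0$ because both $n_i<kx$. I would present this identity as the main proof and mention the derivative only as intuition.

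There is no real obstacle. The only point needing care is that the sign of $kx-n$ is guaranteed by the domain, which is what lets us divide the inequality on $t$ and makes every bracketed factor positive.
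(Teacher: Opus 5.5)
Your proof is correct and follows essentially the same route as the paper: positivity comes from multiplying the domain condition by $t(kx-n)^2>0$, and strict decrease uses only $kx-n_1>0$ and $kx-n_2>0$, not the lower bound on $t$. Your identity $F(n_2)-F(n_1)=t(n_1-n_2)\bigl[t(2kx-n_1-n_2)+2x\bigr]$ just combines into one line the paper's term-by-term comparison (the squared term decreases because $0<kx-n_1<kx-n_2$, and $-2nxt$ decreases), which the paper then adds.
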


\begin{proof}
Let \(n_1,n_2 \in \mathcal{D}_{x,t}^{(k)}\) with \(n_1 > n_2\).

Since \(n_2 < kx\) (because \(n_2 \in \mathcal{D}_{x,t}^{(k)}\)), we have \(kx - n_2 > 0\). Similarly, \(kx - n_1 > 0\) and \(kx - n_1 < kx - n_2\).

The function \(y \mapsto y^2\) being strictly increasing on \(\mathbb{R}_+\), we obtain:
\[
(kx - n_1)^2 < (kx - n_2)^2.
\]
Multiplying by \(t^2 > 0\):
\[
t^2(kx - n_1)^2 < t^2(kx - n_2)^2.
\]

Furthermore, \(n_1 > n_2\) and \(x,t > 0\) give:
\[
-2n_1xt < -2n_2xt.
\] 
Adding these two inequalities, we obtain \(F(n_1) < F(n_2)\), which establishes the strict decrease .

\vspace{0.5em}
\textbf{Positivity} follows directly from the condition \(t \ge \frac{2nx}{(kx-n)^2}\), which is equi\-valent to : 
\[
t^2(kx-n)^2 \ge 2nxt.
\]
since \(t>0\), which subsequently gives \(F \ge 0\).
\end{proof}

\begin{proposition}[Convergence and boundedness on the domain]
Let \(k \ge 4\) be a fixed integer and let \((x,t) \in \mathbb{N}^{*2}\) be a given pair. The sequence \(n \mapsto F_{x,t}^{(k)}(n)\), defined for \(n \in \mathcal{D}_{x,t}^{(k)}\), satisfies:

\begin{enumerate}
    \item It is strictly decreasing.
    \item It is bounded below by \(0\).
    \item By the monotone convergence theorem applied to the finite sequence \((F(n))_{n \in \mathcal{D}_{x,t}^{(k)}}\), it attains its infimum at the last element of the domain. Let us denote
\[
L_{x,t} = \min_{n \in \mathcal{D}_{x,t}^{(k)}} F_{x,t}^{(k)}(n) \ge 0.
\]
In particular, \(L_{x,t} = F_{x,t}^{(k)}(N_{x,t})\) where \(N_{x,t} = \max \mathcal{D}_{x,t}^{(k)}\).
    \item For every \(n \in \mathcal{D}_{x,t}^{(k)}\), we have the inequality:
    \[
    0 \le L_{x,t} \le F_{x,t}^{(k)}(n) \le F_{x,t}^{(k)}(m_{x,t}),
    \]
    where \(m_{x,t} = \min \mathcal{D}_{x,t}^{(k)}\) (the smallest admissible value).
    \item In particular, \(F_{x,t}^{(k)}\) is bounded on \(\mathcal{D}_{x,t}^{(k)}\) and we may take 
    \[
    C(x,t) = F_{x,t}^{(k)}(m_{x,t})
    \]
    as an explicit upper bound (independent of \(n\) within this domain).
\end{enumerate}
\end{proposition}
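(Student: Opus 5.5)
Items 1 and 2 will follow directly from the previous Proposition (decrease and positivity on the domain): it shows that for \(n_1>n_2\) in \(\mathcal{D}_{x,t}^{(k)}\) we have \(F_{x,t}^{(k)}(n_1)<F_{x,t}^{(k)}(n_2)\), and that \(F_{x,t}^{(k)}(n)\ge 0\) for every \(n\) in the domain. So I will quote it and move on.

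The substantive step is item 3, and the main obstacle is to make sense of ``the last element of the domain'' by showing that \(\mathcal{D}_{x,t}^{(k)}\) is a finite set. This is easy: by definition every \(n\in\mathcal{D}_{x,t}^{(k)}\) satisfies \(N_1\le n<kx\), so the domain lies inside \(\{N_1,\dots,kx-1\}\). Hence it is finite. Assuming it is nonempty (an implicit hypothesis needed for \(\min\) and \(\max\) to be defined, which I will state explicitly), \(N_{x,t}=\max\mathcal{D}_{x,t}^{(k)}\) and \(m_{x,t}=\min\mathcal{D}_{x,t}^{(k)}\) exist.

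Rather than invoke a monotone convergence theorem, I will argue directly. For any \(n\in\mathcal{D}_{x,t}^{(k)}\) with \(n\ne N_{x,t}\) we have \(n<N_{x,t}\), so strict decrease gives \(F(N_{x,t})<F(n)\). Therefore the minimum \(L_{x,t}\) is attained exactly at \(N_{x,t}\), and \(L_{x,t}\ge 0\) by positivity. Symmetrically, \(n>m_{x,t}\) for any \(n\ne m_{x,t}\) in the domain, so \(F(n)<F(m_{x,t})\). This gives the chain \(0\le L_{x,t}\le F(n)\le F(m_{x,t})\) of item 4.

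Item 5 then follows at once. The quantity \(C(x,t)=F_{x,t}^{(k)}(m_{x,t})\) depends only on \((x,t)\) and \(k\), since \(m_{x,t}\) is determined by the pair, so it is an explicit upper bound independent of \(n\).
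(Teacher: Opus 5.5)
Your proposal is correct and follows the paper's route: every item is read off from the strict decrease and positivity established in the preceding proposition. The only difference is in your favor. Where the paper invokes the monotone convergence theorem and treats $L_{x,t}$ as a ``limit'', you observe that $\mathcal{D}_{x,t}^{(k)}\subseteq\{N_1,\dots,kx-1\}$ is finite and compare directly with its maximum and minimum. You also state explicitly the nonemptiness hypothesis that the paper leaves implicit, and it is genuinely needed, since the domain is empty whenever $kx\le N_1$.
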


\begin{proof}
Points 1 and 2 follow from the previous proposition.  
Point 3 is a direct application of the monotone convergence theorem.  
Point 4 uses the decreasing property: for all \(n \ge m_{x,t}\) in the domain, \(F(n) \le F(m_{x,t})\). Moreover, since \(L_{x,t}\) is the limit, we have \(L_{x,t} \le F(n)\) for all \(n\) (as the sequence decreases to its limit).  
Point 5 is an immediate consequence of point 4.
\end{proof}

\section*{Quadratic equivalence theorem}

Let \(k \ge 4\) be a fixed integer, and let \(n \ge N_1\ge 2\) be a given integer.  
Let \(x \in \mathbb{N}^*\) be such that 
\[
x \ge \left\lfloor \frac{n}{k} \right\rfloor + 1 \quad (\text{which ensures } kx > n).
\]

Then the following two statements are equivalent:

\begin{enumerate}
    \item There exist \(y, z \in \mathbb{N}^*\) such that 
    \[
    \frac{k}{n} = \frac{1}{x} + \frac{1}{y} + \frac{1}{z}.
    \]
    
    \item There exists \(t \in \mathbb{N}^*\) such that 
    \[
    F_{x,t}^{(k)}(n) = t^2(kx - n)^2 - 2nxt
    \]
    is a perfect square.
\end{enumerate}

Moreover, when (2) holds and we set 
\(m = \sqrt{F_{x,t}^{(k)}(n)} \in \mathbb{N}\), 
an explicit solution of (1) is given by
\[
y = t(kx - n) + m, \qquad z = t(kx - n) - m,
\]
(or the reverse order).

\begin{proof}
\textbf{(1) $\Rightarrow$ (2).} 
Suppose there exist \(y,z \in \mathbb{N}^*\) satisfying (1). Then
\[
\frac{1}{y} + \frac{1}{z} = \frac{k}{n} - \frac{1}{x} = \frac{kx - n}{nx}.
\]
Reducing to a common denominator yields
\[
\frac{y + z}{yz} = \frac{kx - n}{nx},
\]
that is,
\begin{equation}
nx(y + z) = yz(kx - n). \tag{*}
\end{equation}

Set 
\[
S = y + z \quad \text{and} \quad P = yz.
\]
Equation (*) becomes \(nxS = P(kx - n)\). 
Since \(S\) and \(P\) are integers, we may write \(S = 2t(kx - n)\) 
for some \(t \in \mathbb{N}^*\) (the factor \(2\) is introduced to simplify the discriminant computation). 
Substituting into (*), we obtain
\[
nx \cdot 2t(kx - n) = P(kx - n),
\]
hence \(P = 2nxt\). Thus,
\[
y + z = 2t(kx - n), \qquad yz = 2nxt.
\]

Consequently, \(y\) and \(z\) are the roots of the quadratic equation
\[
V^2 - 2t(kx - n)V + 2nxt = 0.
\]
Its discriminant is
\[
\Delta = \bigl[2t(kx - n)\bigr]^2 - 4 \cdot 2nxt 
= 4\bigl[t^2(kx - n)^2 - 2nxt\bigr] 
= 4\,F_{x,t}^{(k)}(n).
\]
For \(y\) and \(z\) to be integers, \(\Delta\) must be a perfect square, 
which is equivalent to \(F_{x,t}^{(k)}(n)\) being a perfect square.

\textbf{(2) $\Rightarrow$ (1).} 
Suppose there exists \( x,t \in \mathbb{N}^*\) such that \(F_{x,t}^{(k)}(n) = m^2\) 
with \(m \in \mathbb{N}\). Then the equation
\[
V^2 - 2t(kx - n)V + 2nxt = 0
\]
has discriminant \(\Delta = 4m^2\), so its roots are
\[
V = \frac{2t(kx - n) \pm 2m}{2} = t(kx - n) \pm m.
\]
Set
\[
y = t(kx - n) + m, \qquad z = t(kx - n) - m.
\]
A direct computation gives
\[
y + z = 2t(kx - n), \qquad 
yz = \bigl(t(kx - n) + m\bigr)\bigl(t(kx - n) - m\bigr) 
= t^2(kx - n)^2 - m^2.
\]
Since \(m^2 = t^2(kx - n)^2 - 2nxt\), we obtain \(yz = 2nxt\).
We then verify
\[
\frac{y + z}{yz} = \frac{2t(kx - n)}{2nxt} = \frac{kx - n}{nx},
\]
which implies
\[
\frac{1}{y} + \frac{1}{z} = \frac{k}{n} - \frac{1}{x},
\]
and finally \(\displaystyle \frac{k}{n} = \frac{1}{x} + \frac{1}{y} + \frac{1}{z}\).

The numbers \(y\) and \(z\) are indeed positive integers. It is immediate that \(y \in \mathbb{N}^*\), since \(kx - n > 0\) because
\[
x \ge \left\lfloor \frac{n}{k} \right\rfloor + 1 > \frac{n}{k}, 
\]
and with \(t \in \mathbb{N}^*\), \(x \in \mathbb{N}^*\), \(n \in \mathbb{N}^*\) and \(m \in \mathbb{N}\), we have \(y \in \mathbb{N}^*\).

Let us now show that \(z \in \mathbb{N}^*\). From
\[
m^2 = t^2(kx - n)^2 - 2nxt < t^2(kx - n)^2,
\]
since \(-2nxt < 0\) for \(n,x,t > 0\), and since the square root function is increasing on \(\mathbb{R}_+\), we obtain
\[
m < t(kx - n).
\]
Hence \(t(kx - n) - m > 0\), and since \(t,x,n,m\) are integers, it follows that
\[
z = t(kx - n) - m \in \mathbb{N}^*.
\]
\end{proof}

  \noindent\textbf{remark.} 
This equivalence holds for all \(n \ge N_1\), where \(N_1\) is a threshold depending on \(k\). The integer \(N_1\) is chosen such that for every \(n \ge N_1\), any solution \((x,y,z)\) of \(\frac{k}{n} = \frac{1}{x} + \frac{1}{y} + \frac{1}{z}\) yields an integer 
\[
t = \frac{y+z}{2(kx-n)} = \frac{yz}{2nx} \in \mathbb{N}^*,
\]
so that we may write \(y = t(kx-n) + m\) and \(z = t(kx-n) - m\) with \(m = \sqrt{F_{x,t}^{(k)}(n)} \in \mathbb{N}\).

For \(k = 4\) (Erdős--Straus), one has \(N_1 = 2\). For \(k \ge 5\), numerical experiments show the existence of a threshold \(N_0 < N_1\) below which solutions exist but do not yield an integer \(t\); instead they give a rational \(t\) that still satisfies the parametrization. This phenomenon is due to the scarcity of Egyptian fraction decompositions for small \(n\) in these cases. In such situations, \(F\) becomes a rational perfect square, yet still produces integer solutions \(y,z\) satisfying the conjecture.

In this work, we impose the condition \(t \in \mathbb{N}^*\) to ensure that \(F\) is an integer, which facilitates its study. The determination of an explicit value for \(N_1\) in terms of \(k\) remains an open problem, related to the complexity of the generalized Erdős--Straus and Sierpiński conjectures. Even in the current literature, the exact value of \(N_0\) (the smallest integer from which the decomposition exists) is not known as a function of \(k\); the generalized conjecture merely states that the decomposition exists for all but finitely many \(n\). Consequently, expressing \(N_1 \ge N_0\) explicitly is even more challenging.

Numerical simulations indicate that the gap between \(N_0\) and \(N_1\) grows slowly with \(k\). For \(k=4\), we have \(N_0 = N_1 = 2\). For \(k=5\) (Sierpiński's conjecture), \(N_0 = 2\) and \(N_1 = 11\). This does not pose a problem for our approach: if we can show that \(F\) always yields an integer perfect square for all \(n \ge N_1\), the remaining range \(N_0 \le n < N_1\) can be handled numerically using the same function \(F\) with rational values of \(t\). In  \cite{1}, we proved the elementary fact that \(N_0\) cannot be strictly less than \(k/3\)

\medskip

This last theorem reduces the search for solutions of the Erdős--Straus conjecture (\(k=4\)), the Sierpiński conjecture (\(k=5\)), or their generalizations (\(k \ge 6\)), to the problem of finding parameters \(x,t\) for which \(F_{x,t}^{(k)}(n)\) is a perfect square. 
The study of the boundedness, decrease, and other properties of \(F\) thus becomes central.

\noindent\textbf{lemma} (Zero Lemma) 
Let \(k \ge 4\) be an integer, and let \(x,t \in \mathbb{N}^*\) be fixed. Consider \(n \mapsto F_{x,t}^{(k)}(n)\) defined on its admissible domain 
\(\mathcal{D}_{x,t} = \{ n \ge N_1\ge 2 \mid n < kx \text{ and } t \ge \dfrac{2nx}{(kx-n)^2} \}\).

If \(F_{x,t}^{(k)}(n_0) = 0\) for some \(n_0 \in \mathcal{D}_{x,t}\), then \(n_0\) is necessarily the upper bound of \(\mathcal{D}_{x,t}\).

\begin{proof}
By Proposition 1, the sequence \(n \mapsto F_{x,t}^{(k)}(n)\) is strictly decreasing on \(\mathcal{D}_{x,t}\).

Suppose there exists \(n_0 \in \mathcal{D}_{x,t}\) with \(F(n_0)=0\), and that \(n_0\) is not the upper bound of \(\mathcal{D}_{x,t}\). Then there would exist \(n_1 > n_0\) with \(n_1 \in \mathcal{D}_{x,t}\). 

By strict decrease, we would have \(F(n_1) < F(n_0) = 0\). 
But since \(n_1 \in \mathcal{D}_{x,t}\), the positivity condition (Proposition 1) implies \(F(n_1) \ge 0\). Hence we obtain \(0 \le F(n_1) < 0\), a contradiction.

Therefore, such an \(n_1\) cannot exist, and \(n_0\) is indeed the greatest element of \(\mathcal{D}_{x,t}\).
\end{proof}

\begin{proposition}[Explicit symmetric solutions for three residue classes]
For the classical Erdős--Straus conjecture (\(k=4\)), all integers \(n \ge 2\) belonging to the residue classes
\[
n \equiv 0,\ 2,\ 3 \pmod{4}
\]
admit symmetric solutions \(y = z\). More precisely, for each such \(n\), there exists an explicit pair \((x,t) \in \mathbb{N}^{*2}\) such that \(F_{x,t}^{(4)}(n) = 0\), yielding
\[
y = z = t(4x-n) \quad \text{and} \quad \frac{4}{n} = \frac{1}{x} + \frac{1}{y} + \frac{1}{z}.
\]

\begin{proof}
For each residue class modulo \(4\) (except \(n \equiv 1 \pmod{4}\)), we construct explicit pairs \((x,t)\) with \(F_{x,t}^{(4)}(n)=0\):

\begin{itemize}
    \item If \(n = 4r\) with \(r \in \mathbb{N}^*\), take \(x = r+1\) and \(t = \dfrac{r(r+1)}{2}\). Then \(4x-n = 4\) and a direct computation shows \(F_{x,t}^{(4)}(n)=0\).
    
    \item If \(n = 4r+2\) with \(r \in \mathbb{N}\), take \(x = r+1\) and \(t = (2r+1)(r+1)\). Then \(4x-n = 2\) and \(F_{x,t}^{(4)}(n)=0\).
    
    \item If \(n = 4r+3\) with \(r \in \mathbb{N}\), take \(x = r+1\) and \(t = 2(4r+3)(r+1)\). Then \(4x-n = 1\) and \(F_{x,t}^{(4)}(n)=0\).
\end{itemize}

These three classes cover all integers except those congruent to \(1\) modulo \(4\), which represent asymptotically \(25\%\) of all integers. Hence at least \(75\%\) of integers are zeros of \(F\). For each such \(n\), the corresponding formulas yield symmetric solutions \(y = z = t(4x-n)\) that satisfy the conjecture.
\end{proof}
\end{proposition}

  \begin{proposition}[Explicit symmetric solutions for a subfamily of \(n \equiv 1 \pmod{4}\)]
Let \(n \equiv 1 \pmod{4}\), written as \(n = 4r + 1\) with \(r \in \mathbb{N}^*\). 
Let \(b\) be an odd integer such that \(b \equiv 3 \pmod{4}\), and define
\[
x = \frac{n+b}{4}.
\]
If \(b \mid n\) (equivalently \(b \mid (4r+1)\)), then there exists an integer \(t \in \mathbb{N}^*\) such that
\(F_{x,t}^{(4)}(n) = 0\). This yields the symmetric solution
\[
y = z = t(4x-n) = t b,
\]
satisfying
\[
\frac{4}{n} = \frac{1}{x} + \frac{1}{y} + \frac{1}{z}.
\]
\end{proposition}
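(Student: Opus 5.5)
The plan is to solve $F_{x,t}^{(4)}(n)=0$ directly for $t$ and then check that the value we get is a positive integer. First, $x=(n+b)/4$ is a positive integer, since $n+b\equiv 1+3\equiv 0 \pmod 4$. Second, $4x-n=b>0$, so $x$ satisfies the hypothesis $kx>n$ of the quadratic equivalence theorem.

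Setting $F=0$ gives $t^2b^2=2nxt$, hence
\[
t=\frac{2nx}{b^2}.
\]
The key step, and the only place where $b\mid n$ is used, is to show that this quotient is an integer. Write $n=bc$ with $c\in\mathbb{N}^*$. Then
\[
x=\frac{b(c+1)}{4}, \qquad t=\frac{2\cdot bc\cdot b(c+1)}{4b^2}=\frac{c(c+1)}{2}.
\]
This is a positive integer, because $c(c+1)$ is a product of consecutive integers and $c\ge 1$. In fact $c\equiv 3\pmod 4$, since $bc\equiv 1$ and $b\equiv 3 \pmod 4$, though this is not needed.

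With $m=0$, the quadratic equivalence theorem (direction (2)$\Rightarrow$(1)) gives
\[
y=z=t(4x-n)=tb=\frac{n(c+1)}{2}.
\]
As a sanity check, compute directly:
\[
\frac{1}{x}+\frac{2}{y}=\frac{4}{b(c+1)}+\frac{4}{n(c+1)}=\frac{4c+4}{n(c+1)}=\frac{4}{n}.
\]
One can also note that $n\in\mathcal{D}_{x,t}$, with equality $t=2nx/(kx-n)^2$; this is consistent with the Zero Lemma.

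I expect no real obstacle. The only delicate point is the integrality of $t$. It hinges on rewriting $t$ in terms of the cofactor $c=n/b$, so that the factor $b^2$ cancels completely, and on the parity of $c(c+1)$.
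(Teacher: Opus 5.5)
Your proposal is correct and follows the paper's proof essentially step for step. Both solve $F=0$ for $t=2nx/b^2$, write $n=b\cdot(\text{cofactor})$ (your $c$, the paper's $w$) to get $t=c(c+1)/2$, and conclude by the parity of consecutive integers; your direct check $\frac{1}{x}+\frac{2}{y}=\frac{4}{n}$ is a small extra that the paper does not write out.
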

\begin{proof}
Since $n = 4r+1$ and $b \equiv 3 \pmod{4}$, we have
\[
n+b \equiv 1+3 \equiv 0 \pmod{4},
\]
hence
\[
x = \frac{n+b}{4} \in \mathbb{N}^*.
\]

We compute
\[
4x-n = 4\frac{(n+b)}{4} - n = n+b-n = b .
\]

From the definition

\(F_{x,t}^{(4)}(n)=t^2(4x-n)^2 - 2nxt\)

the condition \(F_{x,t}^{(4)}(n)=0\) gives
\[
t = \frac{2nx}{(4x-n)^2}, 
\]

Since $4x-n = b$, this becomes
\[
t = \frac{2nx}{b^2}.
\]

Substituting $n=4r+1$ and $x=\frac{(4r+1+b)}{4}$ yields
\[
t
= \frac{2(4r+1)\frac{4r+1+b}{4}}{b^2}
= \frac{(4r+1)(4r+1+b)}{2b^2}.
\]

Assume now that $b \mid (4r+1)$. Then there exists an integer $w \ge 1$ such that
\[
4r+1 = bw.
\]

Hence
\[
4r+1+b = bw + b = b(w+1),
\]
and therefore
\[
(4r+1)(4r+1+b) = b^2 w(w+1).
\]

Substituting into the expression of $t$, we obtain
\[
t = \frac{b^2 w(w+1)}{2b^2}
  = \frac{w(w+1)}{2}.
\]

Since $w$ and $w+1$ are consecutive integers, one of them is even.
Thus $w(w+1)$ is divisible by $2$, and consequently
\[
t \in \mathbb{N}^*.
\]

Therefore, for every $b \equiv 3 \pmod{4}$, the condition
\[
b \mid (4r+1)
\]
defines an infinite arithmetic progression in $r$, and hence in $n$.
This provides infinitely many explicit subfamilies in the class
$n \equiv 1 \pmod{4}$ yielding solutions
\[
y = z = t(4x-n)=tb
\]
that satisfy the Erd\H{o}s--Straus equation.
\end{proof}

\noindent\textbf{Example}  
Let $b=3$, which satisfies $3 \equiv 3 \pmod{4}$.

The condition $3 \mid (4r+1)$ is equivalent to
\[
4r+1 \equiv 0 \pmod{3}.
\]

Since $4 \equiv 1 \pmod{3}$, this becomes
\[
r+1 \equiv 0 \pmod{3},
\]
hence
\[
r \equiv 2 \pmod{3}.
\]

We may write $r = 3a+2$. Then
\[
n = 4r+1 = 4(3a+2)+1 = 12a+9.
\]

Thus all integers of the form
\[
n = 12a+9
\]
form an infinite subfamily of integers congruent to $1 \pmod{4}$
for which the above construction produces an integer $t$ and hence a solution arising from $F=0$.

\subsection*{An example of application to a residue class modulo 840}

Mordell proved that for every integer $n$ not congruent to $1, 11^2, 13^2, 17^2, 19^2, 23^2$ modulo $840$, the equation
\[
\frac{4}{n} = \frac{1}{x} + \frac{1}{y} + \frac{1}{z}
\]
admits a solution. Thus, numbers $n \equiv 1 \pmod{840}$ belong to the cases not covered by Mordell's result.

In this section, we show that our construction, based on the existence of a divisor $b \equiv 3 \pmod{4}$ of $n$, provides infinitely many symmetric solutions for some of these numbers, namely those of the form $n = 840k + 1$.

Consider for instance $b = 11$, which satisfies $11 \equiv 3 \pmod{4}$. We look for integers $k$ such that $11$ divides $n = 840k + 1$.

The condition $11 \mid (840k + 1)$ is equivalent to
\[
840k + 1 \equiv 0 \pmod{11} \quad \Longleftrightarrow \quad 840k \equiv -1 \equiv 10 \pmod{11}.
\]

Since $840 \equiv 4 \pmod{11}$, this becomes
\[
4k \equiv 10 \pmod{11}.
\]

We look for a multiple of $4$ that is congruent to $10$ modulo $11$. One finds that $4 \times 8 = 32 \equiv 10 \pmod{11}$. Hence
\[
4k \equiv 4 \times 8 \pmod{11}.
\]

Since $\gcd(4,11)=1$, Gauss's lemma implies that
\[
k \equiv 8 \pmod{11}.
\]

Thus, all integers $k$ of the form $k = 8 + 11m$ (with $m \in \mathbb{N}$) satisfy the condition. This yields infinitely many values of $k$.

Example : For $m = 0$, we have $k = 8$ and $n = 840 \times 8 + 1 = 6721$. One checks that $6721 = 11 \times 611$. Since $n$ is divisible by $b = 11$ and $11 \equiv 3 \pmod{4}$, our construction  applies. We obtain
\[
x = \frac{n + b}{4} = \frac{6721 + 11}{4} = 1683,
\]
\[
t = \frac{n(n + b)}{2b^2} = \frac{6721 \times 6732}{2 \times 121} = 186\,966,
\]
and therefore
\[
y = z = t \times b = 186\,966 \times 11 = 2\,056\,626.
\]

A direct verification shows that these values satisfy
\[
\frac{4}{6721} = \frac{1}{1683} + \frac{1}{2\,056\,626} + \frac{1}{2\,056\,626}.
\].

The same approach can be applied to the five other residue classes in Mordell's theorem — namely, the integers \(n \equiv  121, 169, 289, 361, 529 \pmod{840}\) not covered by his result — by explicitly choosing an integer \(b \equiv 3 \pmod{4}\) dividing \(n\).  
In what follows, we prove a stronger result: we show that the natural density of integers \(n \equiv 1 \pmod{4}\) admitting a divisor \(b \equiv 3 \pmod{4}\) is equal to \(1\). 

\section*{Natural density of integers \(n \equiv 1 \pmod{4}\) admitting a prime divisor \( \equiv 3 \pmod{4}\)}

\subsection*{1. Definition of the sets}

Let \(\mathbb{P}\) denote the set of prime numbers. We define

\[
\mathcal{A}
=
\{ n \ge 1 : n \equiv 1 \pmod{4},\ \exists p \in \mathbb{P},\ p \equiv 3 \pmod{4},\ p \mid n \},
\]

and

\[
\mathcal{B}
=
\{ n \ge 1 : n \equiv 1 \pmod{4},\ \forall p \in \mathbb{P},\ (p \mid n \Rightarrow p \equiv 1 \pmod{4}) \}.
\]

The elements of \(\mathcal{B}\) are exactly the integers not captured by our construction.

We aim to show:

\[
\frac{|\mathcal{B} \cap [1,x]|}{x} \longrightarrow 0
\quad \text{as } x \to +\infty.
\]

\subsection*{2. Multiplicative structure}

An integer n  belongs to \(\mathcal{B}\) if and only if

\[
n = \prod_{p \equiv 1 \pmod{4}} p^{\alpha_p},
\]
where \(\alpha_p \ge 0\) and almost all \(\alpha_p\) are zero.

Thus \(\mathcal{B}\) is a multiplicative set.

We introduce its Dirichlet series, denoted by \(D(s)\):

\[
D(s) = \sum_{n \in \mathcal{B}} \frac{1}{n^s}.
\]

For real \(s > 1\), the series converges absolutely. By multiplicativity, we obtain the Euler product:

\[
D(s) = \prod_{p \equiv 1 \pmod{4}} \left(1 - \frac{1}{p^s}\right)^{-1}.
\]

We now compare with the Riemann zeta function:

\[
\zeta(s) = \prod_{p} \left(1 - \frac{1}{p^s}\right)^{-1}.
\]

To relate \(D(s)\) to \(\zeta(s)\), we separate the prime \(p = 2\) (which does not appear in \(\mathcal{B}\)) and the primes congruent to \(3\) modulo \(4\):

\[
\zeta(s) = \left(1 - \frac{1}{2^s}\right)^{-1}
          \prod_{p \equiv 1 \pmod{4}} \left(1 - \frac{1}{p^s}\right)^{-1}
          \prod_{p \equiv 3 \pmod{4}} \left(1 - \frac{1}{p^s}\right)^{-1}.
\]

It follows that

\[
\prod_{p \equiv 1 \pmod{4}} \left(1 - \frac{1}{p^s}\right)^{-1}
= \zeta(s) \left(1 - \frac{1}{2^s}\right)
  \prod_{p \equiv 3 \pmod{4}} \left(1 - \frac{1}{p^s}\right).
\]

Therefore,

\[
D(s) = \zeta(s) \left(1 - \frac{1}{2^s}\right)
        \prod_{p \equiv 3 \pmod{4}} \left(1 - \frac{1}{p^s}\right).
\]

\subsection*{3. Behavior near \(s=1\)}

It is well known that, as \(s \to 1^+\) (real),

\[
\zeta(s) \sim \frac{1}{s-1}.
\]

Consider the product:

\[
P(s) = \prod_{p \equiv 3 \pmod{4}} \left(1 - \frac{1}{p^s}\right).
\]

Using the logarithmic approximation:

\[
\log P(s) = \sum_{p \equiv 3 \pmod{4}} \log\left(1 - \frac{1}{p^s}\right)
           \sim -\sum_{p \equiv 3 \pmod{4}} \frac{1}{p^s}.
\]

By Dirichlet's theorem on arithmetic progressions,

\[
\sum_{p \equiv 3 \pmod{4}} \frac{1}{p} = +\infty.
\]

Therefore, as \(s \to 1^+\),

\[
\sum_{p \equiv 3 \pmod{4}} \frac{1}{p^s} \longrightarrow +\infty,
\]

and consequently

\[
P(s) \longrightarrow 0.
\]

Moreover, the factor \(\left(1 - \frac{1}{2^s}\right)\) tends to the nonzero constant \(1 - \frac{1}{2} = \frac{1}{2}\) as \(s \to 1^+\). Hence it does not affect the vanishing behavior of \(D(s)\) relative to \(\zeta(s)\).

It follows that

\[
D(s) = o\!\left(\frac{1}{s-1}\right).
\]

\subsection*{4. Tauberian consequence}

Let

\[
B(x)=|\mathcal{B}\cap[1,x]|.
\]

The Dirichlet series

\[
D(s)=\sum_{n\in\mathcal{B}}\frac1{n^s}
\]

has nonnegative coefficients and satisfies

\[
D(s)=o((s-1)^{-1})
\quad \text{as } s\to1^+.
\]

A standard Tauberian theorem for Dirichlet series with nonnegative 
coefficients  then implies

\[
B(x)=o(x).
\]

Hence the set \(\mathcal{B}\) has natural density zero.

Thus,

\[
|\mathcal{B} \cap [1,x]| = o(x).
\]

Hence

\[
\boxed{\operatorname{dens}(\mathcal{B}) = 0.}
\]

\subsection*{5. Conclusion}

Consequently,
\[
\frac{|\mathcal{A} \cap [1,x]|}{x} = 1 - o(1).
\]

In other words, the proportion of integers \(n \equiv 1 \pmod{4}\) possessing at least one prime divisor \(p \equiv 3 \pmod{4}\) tends to \(1\) as \(x \to +\infty\). 

This density result can be intuitively understood: as an integer \(n \equiv 1 \pmod{4}\) grows larger, it becomes increasingly unlikely that it avoids having at least one prime divisor congruent to \(3 \pmod{4}\), and hence almost all such integers are captured by our construction.
\vspace{0.3cm}

We have exhibited explicit solutions for all integers \(n \not\equiv 1 \pmod{4}\), 
and for the class \(n \equiv 1 \pmod{4}\), we have proved that our construction 
applies to almost all such integers, the exceptions having natural density zero. 
In particular, this provides infinitely many new explicit families of symmetric 
solutions for numbers not covered by Mordell's theorem, and establishes that 
the Erdős--Straus conjecture holds for a proportion of integers tending to \(1\).

\end{document}